\theoremstyle{plain}
\newtheorem{prop}{Proposition}
\newtheorem{thm}[prop]{Theorem}
\newtheorem*{thm*}{Theorem}
\newtheorem*{addendum*}{Addendum}
\newtheorem*{convention*}{Convention}
\theoremstyle{definition}
\newtheorem*{defn*}{Definition}
\newtheorem*{scholium*}{Scholium}
\theoremstyle{remark}
\newtheorem*{example*}{Example}
\numberwithin{equation}{section}
\newcommand{\CC}{\mathbf{C}}
\newcommand{\FF}{\mathbf{F}}
\newcommand{\NN}{\mathbf{N}}
\newcommand{\ZZ}{\mathbf{Z}}
\newcommand{\sL}{\mathscr{L}}
\newcommand{\cH}{\mathscr{H}}
\newcommand{\cR}{\mathscr{R}}
\newcommand{\hb}{\mathrm{H}_\mathrm{b}}
\newcommand{\se}{\subseteq}
\newcommand{\lra}{\longrightarrow}
\newcommand{\vnotimes}{\mathrel{\bar{\otimes}}}
\begin{document}
\title[The Dixmier problem, lamplighters and Burnside groups]{The Dixmier problem, lamplighters and\\ Burnside groups}
\author[N. Monod]{Nicolas Monod$^\ddagger$}
\address{N.M. --- EPFL, Switzerland}
\thanks{$^\ddagger$Supported in part by the Swiss National Science Foundation}
\author[N. Ozawa]{Narutaka Ozawa$^*$}
\address{N.O. --- The University of Tokyo, Japan}
\thanks{*Supported in part by the Japan Society for the Promotion of Science}
\subjclass{Primary 43A07; Secondary 37A20, 47D03}
\renewcommand{\subjclassname}{\textup{2000} Mathematics Subject Classification}
%
\begin{abstract}
J.~Dixmier asked in 1950 whether every non-amenable group admits uniformly bounded representations that cannot be unitarised.
We provide such representations upon passing to extensions by abelian groups. This gives a new
characterisation of amenability. Furthermore, we deduce that certain Burnside groups are non-unitarisable, answering a
question raised by G.~Pisier.
\end{abstract}
\maketitle

\section{Introduction}

A group $G$ is said to be \emph{unitarisable} if every uniformly bounded representation
$\pi$ of $G$ on a Hilbert space $\cH$ is unitarisable, \emph{i.e.}\ there is an invertible operator
$S$ on $\cH$ such that $S\pi(\,\cdot\,)S^{-1}$ is a unitary representation.
Dixmier~\cite{Dixmier} proved that all amenable groups are unitarisable and
asked whether unitarisability characterises amenability.
Since unitarisability passes to subgroups and
non-commutative free groups are not unitarisable,
every group containing a non-commutative free group is non-unitarisable.
For these facts and more background, we refer to Pisier~\cite{Pisier, Pisier_survey}.

Recently, a criterion was discovered~\cite{Epstein-Monod} that lead to examples
without free subgroups (see~\cite{Osin,Epstein-Monod}). We shall improve a strategy proposed in~\cite{MonodICM}
in order to apply ergodic methods to the problem.

\medskip

\begin{flushright}
\itshape
Now are our browes bound with Victorious Wreathes\footnote{Shakespeare, \emph{Richard III}, 1:1 (we quote from the 1623 \emph{First Folio}).}
\end{flushright}
Let $G$ and $A$ be groups. Recall that the associated (restricted) \emph{wreath product}, or \emph{lamplighter group}, is the group
\[
A\wr G\ =\ {\textstyle \bigoplus_G A} \rtimes G,
\]
wherein $\bigoplus_G A$ is the restricted product indexed by $G$ upon which $G$ acts by permutation.
We shall be interested in the case where $A$ and hence also $\bigoplus_G A$ is abelian.

\begin{thm}\label{thm:wreath}
For any group $G$, the following assertions are equivalent.

\begin{itemize}
\item[(i)] The group $G$ is amenable.

\item[(ii)] The wreath product $A \wr G$ is unitarisable for all abelian groups $A$.

\item[(iii)] The wreath product $A \wr G$ is unitarisable for some infinite abelian group $A$.
\end{itemize}
\end{thm}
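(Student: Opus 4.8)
The plan is to prove the cycle $(i)\Rightarrow(ii)\Rightarrow(iii)\Rightarrow(i)$.

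\emph{The implications $(i)\Rightarrow(ii)$ and $(ii)\Rightarrow(iii)$.} Every abelian group is amenable, hence so is the restricted product $\bigoplus_G A$; if $G$ is moreover amenable then $A\wr G=\bigoplus_G A\rtimes G$, being an extension of an amenable group by an amenable group, is amenable, and is therefore unitarisable by Dixmier's theorem. For $(ii)\Rightarrow(iii)$ it suffices to take, say, $A=\ZZ$.

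\emph{Reductions and set-up for $(iii)\Rightarrow(i)$.} I argue contrapositively: assuming $G$ non-amenable and $A$ an arbitrary infinite abelian group, I will exhibit a uniformly bounded, non-unitarisable representation of $\Gamma:=A\wr G$. Since unitarisability passes to subgroups and $A_0\wr G_0\le\Gamma$ whenever $A_0\le A$ and $G_0\le G$, I may first replace $G$ by a countable non-amenable subgroup and $A$ by a countably infinite subgroup, so that $G$ and $A$ are countable. Set $N:=\bigoplus_G A$. Its Pontryagin dual $\widehat N\cong\prod_G\widehat A$ is a compact, metrisable, infinite abelian group, so its Haar measure $m$ is non-atomic. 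The Bernoulli shift $G\acts(\widehat N,m)$ preserves $m$, and, together with multiplication by the characters $\langle\,\cdot\,,n\rangle$ for $n\in N$, it produces a unitary representation $\pi$ of $\Gamma$ on $\cH:=L^2(\widehat N,m)$; Fourier analysis on $\widehat N$ identifies $(\pi,\cH)$ with the quasi-regular representation $\ell^2(\Gamma/G)\cong\ell^2(N)$, in which $N$ acts by its left regular representation while $G$ permutes the canonical basis via the shift on $N$.

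\emph{The core of the argument.} I will invoke the criterion of \cite{Epstein-Monod} (building on the strategy proposed in \cite{MonodICM}), which reduces the non-unitarisability of $\Gamma$ to exhibiting, with coefficients in $\pi$, a bounded cocycle-type obstruction --- in cohomological language, a non-zero element of the kernel of the comparison map $\hb^2(\Gamma;\pi)\to H^2(\Gamma;\pi)$, that is, an unbounded $1$-cochain whose coboundary is a bounded $2$-cocycle --- which one then converts into a uniformly bounded representation by the standard upper-triangular construction. The decisive feature is the interplay between the amenable lamp group $N=\bigoplus_G A$ and the non-amenable $G$: the lamp direction supplies the left regular representation of the amenable group $N$, and the required cochain is built along this direction from a combinatorial device attached to $G$, in the spirit of the random forests of \cite{Epstein-Monod}; amenability of $N$ is what keeps the coboundary bounded, while non-amenability of $G$ is what forces every primitive to be unbounded, hence the class non-trivial --- this last fact being certified by a Kesten-type strict inequality for the $\ell^2$-norm of the $G$-random walk. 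I expect this to be the main obstacle: carrying out the $\ell^2$-estimates that simultaneously keep the coboundary bounded and show that the resulting uniformly bounded representation is not similar to any unitary one. It is also the point at which passing to the wreath product is indispensable, for a non-amenable $G$ may carry no such class by itself --- for instance when $H^1(G;\ell^2(G))=0$, as happens for groups with Kazhdan's property (T) --- whereas the abelian extension $A\wr G$ provides the missing room.
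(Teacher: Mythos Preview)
Your implications $(i)\Rightarrow(ii)\Rightarrow(iii)$ and the reduction to countable $A$ and $G$ are correct and agree with the paper. The gap is in $(iii)\Rightarrow(i)$: you announce that you will produce a class in $\ker\big(\hb^2(\Gamma;\pi)\to H^2(\Gamma;\pi)\big)$ but never construct it, and you yourself flag the needed estimates as ``the main obstacle''. More seriously, the framework is off. The upper-triangular construction that witnesses non-unitarisability takes a bounded \emph{derivation} into the operator module $\sL(\cH)$, i.e.\ a class in $\hb^1(\Gamma,\sL(\cH))$; an unbounded $1$-cochain $\Gamma\to\cH$ with bounded coboundary is not a derivation and does not define a homomorphism via the triangular formula. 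So as written the proposal supplies no mechanism turning the non-amenability of $G$ into a concrete non-unitarisable representation of $A\wr G$, and the allusions to Kesten-type inequalities and $H^1(G;\ell^2 G)$ do not substitute for one.

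The paper's mechanism is quite different. The decisive input, which you do not invoke, is the Gaboriau--Lyons theorem: the orbit equivalence relation $\cR$ of the Bernoulli $G$-action on $[0,1]^G$ contains the orbits of a free measure-preserving action of a non-abelian free group $\FF$. One then embeds $A$ into the unitary group of $L^\infty(Y)$ for an $\FF$-fundamental domain $Y\subset\cR$ --- this is exactly where the hypothesis that $A$ is \emph{infinite} enters, via Pontryagin duality, to ensure that $A$ generates all of $L^\infty(Y)$ --- and thereby obtains a unitary representation of $H=A\wr G$ on $L^2(\cR)$. Writing $N=\bigoplus_G A$, amenability of $N$ gives $\hb^*(H,\sL(L^2(\cR)))\cong\hb^*(G,\sL(L^2(\cR))^N)$, the commutant is computed as $L^\infty(Y)\vnotimes\sL(\ell_2\FF)$, and the cohomological induction of~\cite{Monod-Shalom} through the pair of fundamental domains yields an injection $\hb^1(\FF,\sL(\ell_2\FF))\hookrightarrow\hb^1(H,\sL(L^2(\cR)))$. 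Since the source is non-zero by~\cite{Pisier}, one obtains a bounded non-inner derivation and hence a non-unitarisable uniformly bounded representation of $A\wr G$. The non-amenability of $G$ is used only through Gaboriau--Lyons; no spectral estimates are needed.
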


The above theorem leads to a partial answer to a question of G.~Pisier,
namely whether free Burnside groups are unitarisable
(see \emph{e.g.}~\cite{Pisier_survey}).

\begin{thm}\label{thm:Burnside}
Let $m,n,p$ be integers with $m,n\geq 2$, $p\geq 665$ and $n,p$ odd. Then the free Burnside group $B(m, np)$ of exponent $np$ with $m$ generators is non-unitarisable.
\end{thm}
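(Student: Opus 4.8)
The plan is to combine Theorem~\ref{thm:wreath} with two inputs from the theory of groups of large odd exponent. Recall first that unitarisability passes to subgroups (as noted in the introduction); it also passes to quotients, since if $\Gamma\twoheadrightarrow Q$ and $\pi$ is a uniformly bounded representation of $Q$, then its pull-back $\tilde\pi$ to $\Gamma$ is uniformly bounded, factors through $Q$, and any $S$ making $S\tilde\pi(\,\cdot\,)S\inv$ unitary also unitarises $\pi$. It therefore suffices to find, among the subgroups and quotients of $B(m,np)$, a lamplighter $A\wr\Gamma$ with $\Gamma$ non-amenable and $A$ an infinite abelian group: by the equivalence (i)$\Leftrightarrow$(iii) of Theorem~\ref{thm:wreath}, such a group is not unitarisable.

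I would take $L:=(\bigoplus_{\NN}\ZZ/n\ZZ)\wr B(2,p)$. Its lamp group $\bigoplus_{\NN}\ZZ/n\ZZ$ is infinite abelian, and its base $B(2,p)$ is non-amenable by Adian's theorem --- this is the point at which ``$p\geq 665$, $p$ odd'' enters. Moreover $L$ has exponent dividing $np$: writing a typical element as $(f,g)$ with $g\in B(2,p)$, one has $g^p=1$, hence $(f,g)^p=(\sigma f,1)$ where $\sigma=1+g+\cdots+g^{p-1}$ acts on the abelian ($n$-torsion) lamp module, and therefore $(f,g)^{np}=(n\sigma f,1)=(0,1)$. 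Being also countable, $L$ is a quotient of the free Burnside group $B(\aleph_0,np)$ of countable rank.

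It remains to place $B(\aleph_0,np)$ inside $B(m,np)$ for every $m\geq 2$. The elementary retraction $F_m\to F_2$ killing the superfluous free generators descends to an injection $B(2,e)\hookrightarrow B(m,e)$ for all $e$ and $m$; and the Novikov--Adian structure theory of periodic groups provides, for every odd $e\geq 665$, an embedding of $B(\aleph_0,e)$ into $B(2,e)$. Applying the latter with $e=np$ (odd and at least $665$ by hypothesis) yields $B(\aleph_0,np)\le B(2,np)\le B(m,np)$. Since $B(m,np)$ thus contains a subgroup that surjects onto the non-unitarisable group $L$, the two inheritance properties force $B(m,np)$ itself to be non-unitarisable.

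The main obstacle is confined to the two imported group-theoretic statements --- Adian's non-amenability of $B(2,p)$ and the embedding $B(\aleph_0,e)\hookrightarrow B(2,e)$ --- both resting on the Novikov--Adian machinery; the remaining steps (the exponent and countability of $L$, the retraction, the inheritance of unitarisability) are routine. I would stress that a subgroup, and not merely a quotient, of $B(m,np)$ is unavoidable here: a lamplighter whose lamp group is an infinite abelian group of finite exponent surjects onto that lamp group and hence is infinitely generated, whereas $B(m,np)$ is finitely generated; so no quotient of $B(m,np)$ can be such a lamplighter, and Theorem~\ref{thm:wreath} certifies non-unitarisability of a lamplighter only through an infinite lamp group. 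With Theorem~\ref{thm:wreath} in hand, the substance of Theorem~\ref{thm:Burnside} is precisely this combinatorial realisation.
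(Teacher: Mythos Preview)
Your argument is correct and follows the same route as the paper: Adian's non-amenability of $B(2,p)$, Theorem~\ref{thm:wreath} applied to $(\bigoplus_{\NN}\ZZ/n\ZZ)\wr B(2,p)$, the universal property realising this lamplighter as a quotient of $B(\aleph_0,np)$, and the embedding $B(\aleph_0,np)\hookrightarrow B(2,np)$. The only cosmetic differences are that the paper credits the last embedding specifically to \v{S}irvanjan rather than to the Novikov--Adian theory at large, and that for the final passage to $B(m,np)$ the paper uses the surjection $B(m,np)\twoheadrightarrow B(2,np)$ (invoking passage to quotients) whereas you use the injection $B(2,np)\hookrightarrow B(m,np)$ coming from that same retraction (invoking passage to subgroups); both are valid.
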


\subsection*{Acknowledgements}
The essential part of this work was done during the authors'
stay at the Institute of Mathematical Sciences in Chennai.
The authors would like to thank Professor V.~S.~Sunder and
IMSc for their very kind hospitality.

\section{Proofs}
Let $G$ be a group and $(\pi,\cH)$ be a unitary representation of $G$.
We write $\sL(\cH)$ for the algebra of bounded operators of $\cH$.
A map $D\colon G \to \sL(\cH)$ is called a \emph{derivation}
if it satisfies the Leibniz rule $D(gh) = D(g)\pi(h) + \pi(g)D(h)$, or equivalently if
the map $\pi_D$ defined by
\[
\pi_D(g) = \begin{pmatrix}\pi(g) & D(g) \\ 0 & \pi(g)\end{pmatrix}
\in \sL(\cH\oplus\cH)
\]
is a group homomorphism.
In that case, $\pi_D$ is a uniformly bounded representation
if and only if $D$ is a bounded derivation.
Moreover, $\pi_D$ is unitarisable if and only if $D$ is inner,
\emph{i.e.}\ there is $T \in \sL(\cH)$ such that $D(g) = \pi(g)T - T\pi(g)$.
(See Lemma 4.5 in~\cite{Pisier} for a proof of this fact.)
To set up a cohomological framework for studying this problem,
we will view $\sL(\cH)$ as a coefficient $G$-module whose $G$-action
is given by the conjugation $g\cdot T = \pi(g)T\pi(g)^*$.
Then, the space of bounded derivations modulo inner derivations is
canonically isomorphic to the first bounded cohomology group $\hb^1(G, \sL(\cH))$.
Hence, to prove non-unitarisability of $G$, it suffices to produce a
unitary $G$-representation $(\pi,\cH)$ for which $\hb^1(G, \sL(\cH)) \neq 0$.

\medskip

We now undertake the proof of Theorem~\ref{thm:wreath}.
It suffices to show that if $A$ is infinite abelian and $G$ is non-amenable,
then the wreath product $H=A\wr G$ is non-unitarisable.

We can and shall assume that $A$ and $G$ are countable. Indeed, since amenability is preserved under direct limits,
$G$ contains some countable non-amenable group $G_0$. Further, $A$ contains an infinite countable
$G_0$-invariant subgroup $A_0$ and $A_0\wr G_0$ is a subgroup of $A\wr G$.
Thus our claim follows since unitarisability passes to subgroups.

\medskip

Let $\FF$ be a countable non-commutative free group. The proof relies on the following two facts.
(1)~$\hb^1(\FF, \sL(\ell_2\FF)) \neq 0$, see the proof of Theorem~2.7* in~\cite{Pisier}.
(2)~Every non-amenable countable group admits a free type~$\mathrm{II}_1$ action whose orbits
contain the orbits of a free $\FF$-action (\cite{Gaboriau-Lyons}), as described below.
The strategy of the proof is to induce $\hb^1(\FF, \sL(\ell_2\FF))$
through this ``randembedding'' in the sense of~\cite{MonodICM}.

We henceforth consider a non-amenable countable group $G$ and the corresponding Bernoulli shift action
on the compact metrisable product space $X=[0,1]^G$ endowed with the product of the Lebesgue measures.
Gaboriau and Lyons prove in~\cite{Gaboriau-Lyons} that the resulting equivalence relation $\cR\se X\times X$
contains the equivalence relation of some free measure-preserving $\FF$-action upon $X$.
In particular, we have commuting $G$-\ and $\FF$-actions on $\cR$ given by the action on the first, respectively
the second coordinate. These actions preserve the $\sigma$-finite measure on $\cR$ provided by integrating over $X$
the counting measure on orbits. Each of these actions admits a fundamental domain; let $Y\se\cR$ be a fundamental
domain for $\FF$. We may now forget the orbit equivalence relation and view $\cR$ just as a standard measure space
with a measure-preserving $G\times\FF$-action such that $G$ admits a fundamental domain $X$ of finite measure
and $\FF$ admits a fundamental domain $Y$.
We identify $\cR$ with $Y \times \FF$ in such a way that $t^{-1}y\in\cR$ corresponds to $(y,t)\in Y \times \FF$.
Then, $s\in\FF$ acts on $Y \times \FF$ by $s(y,t)=(y,ts^{-1})$ and
$g\in G$ acts by $g(y,t)=(g\cdot y,\alpha(g,y)t)$, where $g\cdot y\in Y$ is the (essentially)
unique element in $\FF gy\cap Y\subset\cR$ and $\alpha(g,y)\in\FF$ is the (essentially)
unique element such that $\alpha(g,y)gy=g\cdot y$. It follows that $\alpha$ satisfies the
cocycle relation $\alpha(gh,y)=\alpha(g,h\cdot y)\alpha(h,y)$.

\medskip

We now consider any countable infinite abelian group $A$. We claim that $A$ has a representation
into the unitaries of the von Neumann algebra $L^\infty(Y)$ whose image generates $L^\infty(Y)$
as a von Neumann algebra. By construction, $Y$ is a standard Borel space with a $\sigma$-finite
non-atomic measure. Furthermore, as far as the present claim is concerned, we may temporarily
assume this measure finite since only its measure class is of relevance. Since $A$ is countably infinite,
its Pontryagin dual $\widehat A$ (for $A$ endowed with the discrete topology) is a non-discrete
compact metrisable group. In other words, we have reduced to the case where we may assume that $Y$ is $\widehat A$
endowed with a Haar measure. Fourier transform establishes an isomorphism between $L^\infty(\widehat A)$ and the group
von Neumann algebra $L(A)\se \sL(\ell_2A)$, which is by definition generated by the
unitary regular representation of $A$; this proves the claim.

\smallskip

Returning to the main argument, we view $A$ in the unitary group of
$L^\infty(Y)\cong L^\infty(Y)\otimes\CC 1_{\FF}\subset L^\infty(\cR)$.
Since $A$ and $gAg^{-1}\subset L^\infty(Y)$ commute,
this gives rise to a unitary representation of $H=A\wr G$ on $L^2(\cR)$.
We will prove that $\hb^1(H, \sL(L^2(\cR))) \neq 0$.

\medskip

We write $N=\bigoplus_G A$. Since $N$ is amenable and $\sL(L^2(\cR))$ is a dual module,
a weak-$*$ averaging argument shows that there is a canonical isomorphism
\[
\hb^*(H, \sL(L^2(\cR)))\ \cong\ \hb^*(G, \sL(L^2(\cR))^N)
\]
(see Corollary~7.5.10 in~\cite{Monod}).
With the identification $\cR = Y\times \FF$, one has
\[
\sL(L^2(\cR))^N\ =\ N'\cap \sL(L^2(\cR))\ =\ L^\infty(Y) \vnotimes \sL(\ell_2\FF)
\ \cong\ L^\infty(Y, \sL(\ell_2\FF))
\]
(see Theorem~IV.5.9 in~\cite{TakesakiI}).
Keeping track of the $G$-representation, one sees that $g\in G$ acts on $L^\infty(Y, \sL(\ell_2\FF))$
by $(g\cdot f)(y)=\tau_{\alpha(g,g^{-1}\cdot y)}(f(g^{-1}\cdot y))$,
where $\tau$ denotes the $\FF$-action on $\sL(\ell_2\FF)$.
For ease of notation, we denote the coefficient $\FF$-module $\sL(\ell_2\FF)$ by $V$.
Then, one further has a $G$-isomorphism
\[
L^\infty(Y, V)\ \cong\ L^\infty(\cR, V)^{\FF},
\]
where $f \in L^\infty(Y, V)$ corresponds to $\tilde{f} \in L^\infty(\cR, V)^\FF$
defined by $\tilde{f}(y,t)=\tau_t^{-1}(f(y))$.
Now, $\FF$ acts on $L^\infty(\cR, V)$ by $(s\cdot F)(z)=\tau_s(F(s^{-1}z))$ and
$G$ acts by $(g\cdot F)(z)=F(g^{-1}z)$.
Since both the $\FF$-action and the $G$-action on $\cR$ admit a fundamental domain,
Proposition~4.6 in~\cite{Monod-Shalom} implies that
\[
\hb^*(G,L^\infty(\cR, V)^{\FF})\ \cong\ \hb^*(\FF,L^\infty(\cR, V)^{G})
\ \cong\ \hb^*(\FF,L^\infty(X, V)).
\]
(See also Proposition~5.8 in~\cite{MonodICM}.)
Since $X = \cR / G$ has a finite $\FF$-invariant measure, the inclusion
$V\hookrightarrow L^\infty(X,V)$ has a $G$-equivariant left inverse.
It follows that the corresponding morphism
\[
\hb^*(\FF, V) \lra  \hb^*(\FF,L^\infty(X, V))
\]
is an injection. Therefore, putting all identifications together,
we conclude that there are injections
\[
\hb^*(\FF, \sL(\ell_2\FF)) \lra \hb^*(H, \sL(L^2(\cR)))
\]
in all degrees. Since $\hb^1(\FF, \sL(\ell_2\FF))\neq0$, this completes the proof.\qed

\medskip

Analysing the proof at the level of derivations, the above injection maps
$D\colon \FF \to \sL(\ell_2\FF)$ to $\tilde{D}\colon H \to \sL(L^2(Y,\ell_2\FF))$
defined by
\[
(\tilde{D}(ag)\xi)(y) = a(y)D(\alpha(g,g^{-1}\cdot y))\xi(g^{-1}\cdot y),
\]
where $a\in N$ is viewed as an element of $L^\infty(Y)$, $g\in G$ and $\xi\in L^2(Y,\ell_2\FF)$.

\begin{proof}[Proof of Theorem~\ref{thm:Burnside}]
By a theorem of Adyan~\cite{Adyan83}, the free Burnside group $G=B(2, p)$ is non-amenable.
Therefore, Theorem~\ref{thm:wreath} implies that $(\bigoplus_{\NN}\ZZ/n\ZZ) \wr G$ is non-unitarisable.
Notice that this wreath product is a countably generated group of exponent $np$.
Therefore, by the universal property of free Burnside groups, it is a quotient of $B(\aleph_0, np)$.
In particular, the latter is non-unitarisable. It was shown by \v{S}irvanjan~\cite{Shirvanjan}
that $B(\aleph_0, np)$ embeds into $B(2, np)$ which is therefore also non-unitarisable.
Finally, each $B(m, np)$ surjects onto $B(2, np)$ as long as $m\geq 2$, concluding the proof.
\end{proof}

\bibliographystyle{amsalpha}

\end{document}